\newcommand{\Z}{\mathbb{Z}}
\DeclareMathOperator{\spn}{span}
\DeclareMathOperator{\lsp}{span}
\DeclareMathOperator{\clsp}{\overline{\lsp}}
\newtheorem{theorem}{Theorem}
\newtheorem{lemma}[theorem]{Lemma}
\theoremstyle{definition}
\newtheorem{example}[theorem]{Example}
\theoremstyle{remark}
\newtheorem{remark}[theorem]{Remark}
\theoremstyle{definition}
\theoremstyle{remark}
\title[Morita equivalences of Leavitt path algebras]{Subsets of vertices give 
Morita equivalences of Leavitt path algebras}
\author[L.O. Clark]{Lisa Orloff Clark}
\author[A. an Huef]{Astrid an Huef}
\author[P. Luiten-Apirana]{Pareoranga Luiten-Apirana}
\address[L.O. Clark, A. an Huef, P. Luiten-Apirana]{Department of Mathematics and
Statistics, University of Otago, P.O. Box 56, Dunedin 9054, New Zealand}
\email{lclark, astrid @maths.otago.ac.nz}
\email{pareorangaluitenapirana@gmail.com}
\date{12 January 2017}
\keywords{Directed graph, Leavitt path algebra, Morita context, Morita equivalence, graph algebra}
\thanks{This research has been supported by a University of Otago Research Grant.}
\subjclass[2010]{16D70}
\begin{document}

\begin{abstract}  We show that every subset of vertices of a directed graph $E$  gives a Morita equivalence between a subalgebra and an ideal of the associated Leavitt path algebra. We use this observation to prove an algebraic version of a theorem of Crisp and Gow: certain subgraphs of $E$ can be contracted to  a new graph $G$ such that the Leavitt path algebras of $E$ and $G$ are Morita equivalent. We provide examples to illustrate how 
desingularising a graph, and in- or out-delaying of a graph, all fit into this setting.
\end{abstract}

\maketitle


\section{Introduction}

Given a directed graph $E$, Crisp and Gow identified in \cite[Theorem~3.1]{CG} a type of subgraph  which can be ``contracted'' to give a new graph $G$ whose $C^*$-algebra $C^*(G)$ is Morita equivalent to $C^*(E)$.   Crisp  and Gow's construction is widely applicable, as they point out in \cite[\S4]{CG}. It includes, for example, Morita equivalences of  the $C^*$-algebras of graphs that are elementary-strong-shift-equivalent \cite{Bates, Drinen-Sieben}, or are in- or out-delays of each other \cite{BP}.

The $C^*$-algebra of a directed graph $E$ is the universal $C^*$-algebra generated by mutually orthogonal projections $p_v$ and partial isometries $s_e$ associated to the vertices $v$ and edges $e$ of $E$, respectively, subject to relations.
In particular, the relations  capture the connectivity of the graph.  For any subset $V$ of vertices, $\sum_{v\in V} p_v$ converges to a projection $p$ in the multiplier algebra of $C^*(E)$. (If $V$ is finite, then $p$ is in $C^*(E)$.)  Then the module $pC^*(E)$ implements a Morita equivalence between the corner  $pC^*(E)p$ of $C^*(E)$ and the ideal $C^*(E)pC^*(E)$ of $C^*(E)$.   The difficult part is to identify $pC^*(E)p$ and $C^*(E)pC^*(E)$ with known algebras. The corner $pC^*(E)p$ may not be another graph algebra, but sometimes it is (see, for example, \cite{Crisp}). The projection  $p$ is called full when $C^*(E)pC^*(E)=C^*(E)$.

Now let $R$ be a commutative ring with identity.  A purely algebraic analogue of the graph $C^*$-algebra is the Leavitt path algebra  $L_R(E)$ over $R$.   This paper is based on the very simple observation that every subset $V$ of the vertices of a directed graph $E$ gives an algebraic version of the  Morita equivalence between $pC^*(E)p$ and $C^*(E)pC^*(E)$ for Leavitt path algebras (see Theorem~\ref{main theorem}). We show that this observation is widely applicable  by proving an algebraic version of Crisp and Gow's theorem (see Theorem~\ref{CG theorem}).  A special case of this result has been very successfully used in both \cite[Section~3]{ALPS} and \cite{Rune-Sorensen}.

If $V$ is infinite, we cannot make sense of the projection $p$ in $L_R(E)$, but we can make sense of the algebraic analogues of the sets $pC^*(E)$, $pC^*(E)p$ and $C^*(E)pC^*(E)$.  For example,
\[pC^*(E)=\clsp\{s_\mu s_{\nu^*}: \text{$\mu,\nu$ are paths in $E$ and $\mu$ has range in $V$}\}\]
has analogue
\[M=\lsp_R\{s_\mu s_{\nu^*}: \text{$\mu,\nu$ are paths in $E$ and $\mu$ has range in $V$}\},\]
where we also use $s_e$ and $p_v$ for  universal generators of $L_R(E)$. Theorem~\ref{main theorem} below gives a surjective Morita context $(M, M^*, MM^*, M^*M)$ between the $R$-subalgebra $MM^*$ and the ideal $M^*M$ of $L_R(E)$.  The set $V$ is full, in the sense that $M^*M=L_R(E)$, if and only if the saturated hereditary closure of $V$ is the whole vertex set of $E$ (see Lemma~\ref{fullness lemma}).

Recently, the first author and Sims proved in \cite[Theorem~5.1]{CS} that equivalent groupoids have Morita equivalent Steinberg $R$-algebras. They then proved that the graph groupoids of the graphs $G$ and $E$ appearing in Crisp and Gow's theorem are equivalent groupoids \cite[Proposition~6.2]{CS}. Since the Steinberg algebra of a graph groupoid is canonically isomorphic to the Leavitt path algebra of the graph, they deduced that the Leavitt path algebras of $L_R(G)$ and $L_R(E)$ are Morita equivalent.

In particular,  we obtain a direct proof of  \cite[Proposition~6.2]{CS} using only elementary methods.   There are two advantages to our elementary approach: it illustrates on the one hand where we have had to use different techniques from the $C^*$-algebraic analogue, and on the other hand where we can just use the $C^*$-algebraic results already established. 


\section{Preliminaries}

A directed graph $E=(E^0, E^1, r, s)$ consists of countable sets $E^0$ and $E^1$, and range and source maps $r,s: E^1\to E^0$.  We think of $E^0$ as the set of vertices, and of $E^1$ as the set of edges directed by $r$ and $s$. A vertex $v$ is called a \emph{infinite receiver} if $|r^{-1}(v)|=\infty$ and is called a \emph{source} if $|r^{-1}(v)|=0$. Sources and infinite receivers are called \emph{singular} vertices.

We  use the convention that a path is  a sequence of edges $\mu=\mu_1\mu_2\cdots$ such that $s(\mu_i)=r(\mu_{i+1})$.  We  denote the $i$th edge in a path $\mu$ by $\mu_i$.  We say a path $\mu$ is finite if the sequence is finite and denote its length by $|\mu|$. Vertices are regarded as paths of length $0$. We denote the set of finite paths by $E^*$ and  the set of infinite paths by $E^\infty$.  We usually use the letters $x, y$ for infinite paths. We extend the range map $r$ to $\mu\in E^* \cup E^\infty$  by $r(\mu)=r(\mu_1)$;  for $\mu\in E^*$ we also extend the source map $s$ by $s(\mu)=s(\mu_{|\mu|})$. 

Let  $(E^1)^*:= \{e^*: e \in E^1\}$ be a set of formal symbols called {\em ghost edges}. If $\mu\in E^*$, then   we write $\mu^*$ for $\mu_{|\mu|}^*\dots\mu_2^*\mu_1^*$ and call it a \emph{ghost path}.  We extend $r$ and $s$ to the ghost paths  by $r(\mu^*)=s(\mu)$ and $s(\mu^*)=r(\mu)$.

Let $R$ be a commutative ring with identity and  let $A$ be an $R$-algebra.  A \emph{Leavitt $E$-family in $A$} is a set $\{P_v,S_e, S_{e^*}:  v\in E^0,e\in E^1\}\subset A$ 
where $\{P_v: v\in E^0\}$ is a set of mutually orthogonal idempotents, and
\begin{enumerate}
\item[(L1)]\label{L1} $P_{r(e)}S_e=S_e=S_eP_{s(e)}$ and $P_{s(e)}S_{e^*}=S_{e^*}=S_{e^*}P_{r(e)}$ for $e\in E^1$;
\item[(L2)]\label{L2} $S_{e^*}S_f=\delta_{e,f}  P_{s(e)}$ for $e,f\in E^1$; and
\item[(L3)]\label{L3} for all non-singular $v\in E^0$, $P_v=\sum_{r(e)=v} S_eS_{e^*}$.
\end{enumerate}

For a path $\mu\in E^*$ we set $S_\mu:=S_{\mu_1}\dots S_{\mu_{|\mu|}}$.
The \emph{Leavitt path algebra}  $L_R(E)$  is the universal $R$-algebra generated by a  
universal Leavitt $E$-family  $\{p_v,s_e, s_{e^*}\}$:  that is, if $A$ is an $R$-algebra and $\{P_v, S_e, S_{e^*}\}$ is a Leavitt 
$E$-family in $A$, then there exists a unique $R$-algebra homomorphism $\pi: L_R(E)\to A$ such that $\pi(p_v)=P_v$ and $\pi(s_e)=S_e$  \cite[\S2-3]{Tom}.   It follows from (L2) that 
\[
L_R(E)=\lsp_R\{s_\mu s_\nu^*:\mu, \nu\in E^*\}.
\]


\section{Subsets of vertices of a directed graph give Morita equivalences}


\begin{theorem} \label{main theorem}
Let $E$ be a directed graph, let $R$ be a commutative ring with identity and let $\{p_v, s_e, s_{e^*}\}$ be a universal generating Leavitt $E$-family in $L_R(E)$. Let $V \subset E^0$, and
\begin{align*}
M:=\spn_R\{s_{\mu}s_{\nu^*} \colon\mu,\nu\in E^*, r(\mu) \in V\}\text{ and }
M^*:=\spn_R\{s_{\mu}s_{\nu^*} \colon\mu,\nu\in E^*, r(\nu) \in V\}.
\end{align*}
Then 
\begin{enumerate}
\item $MM^*$ is an $R$-subalgebra of $L_R(E)$;
\item $MM^*=\spn\{s_\mu s_{\nu^*}: r(\mu), r(\nu)\in V\}$, and  $M^*M$  is an ideal  of $L_R(E)$ containing $MM^*$;
\item  with actions given by   multiplication in $L_R(E)$,  $M$ is an $MM^*$--$M^*M$-bimodule and  $M^*$ is an $M^*M$--$MM^*$-bimodule;
\item there are bimodule homomorphisms \[\Psi \colon M \otimes_{M^*M} M^* \to MM^* \quad \text{and}  \quad
\Phi \colon M^*\otimes_{MM^*} M \to M^*M\] such that
$(MM^*, M^*M, M, M^*, \Psi, \Phi)$ is a surjective Morita context.
\end{enumerate}
\end{theorem}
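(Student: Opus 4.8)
The backbone of the argument is the multiplication rule for the spanning monomials. Writing a typical product as $s_\mu s_{\nu^*}\cdot s_\alpha s_{\beta^*}=s_\mu(s_{\nu^*}s_\alpha)s_{\beta^*}$ and evaluating $s_{\nu^*}s_\alpha$ by (L1)--(L2), one finds the product is $0$ unless $\alpha$ extends $\nu$ or $\nu$ extends $\alpha$, in which case it collapses to $s_{\mu\alpha'}s_{\beta^*}$ (when $\alpha=\nu\alpha'$) or to $s_\mu s_{(\beta\nu')^*}$ (when $\nu=\alpha\nu'$). The single observation driving everything is that in both surviving cases the \emph{outer} data is untouched: the leading range $r(\mu)$ and the trailing range $r(\beta)$ of the result equal those of the factors. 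I would record this as two absorption facts: $ML_R(E)\subseteq M$ (so $M$ is a right ideal) and $L_R(E)M^*\subseteq M^*$ (so $M^*$ is a left ideal); the second follows from the first by applying the anti-isomorphism $s_\mu s_{\nu^*}\mapsto s_\nu s_{\mu^*}$, which interchanges $M$ and $M^*$.

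Granted these two facts, parts (1) and (2) fall out. For the description $MM^*=\spn\{s_\mu s_{\nu^*}:r(\mu),r(\nu)\in V\}$, the inclusion $\subseteq$ is the multiplication rule applied to the defining generators of $M$ and $M^*$, while $\supseteq$ uses the factorisation $s_\mu s_{\nu^*}=(s_\mu s_{\mu^*})(s_\mu s_{\nu^*})$, whose first factor lies in $M$ and whose second lies in $M^*$. Closure of $MM^*$ under multiplication, hence (1), is then immediate from the same rule. That $M^*M$ is an ideal I would deduce formally: $L_R(E)(M^*M)=(L_R(E)M^*)M\subseteq M^*M$ and $(M^*M)L_R(E)=M^*(ML_R(E))\subseteq M^*M$, by the two absorption facts and associativity. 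The containment $MM^*\subseteq M^*M$ comes from the same factorisation, now reading $s_\mu s_{\mu^*}\in M^*$ and $s_\mu s_{\nu^*}\in M$.

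For (3), every required closure reduces to associativity together with the absorption facts; for instance $MM^*\cdot M=M(M^*M)\subseteq ML_R(E)\subseteq M$ and $M^*\cdot MM^*=(M^*M)M^*\subseteq L_R(E)M^*\subseteq M^*$, and symmetrically for the remaining two actions. Because all four actions are restrictions of the single associative multiplication in $L_R(E)$, the left and right actions automatically commute, so $M$ and $M^*$ are genuine bimodules with no further checking.

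Finally, for (4) I would take $\Psi(m\otimes n)=mn$ and $\Phi(n\otimes m)=nm$. Each is $R$-bilinear and balanced over the relevant ring by associativity (e.g.\ $(m\cdot b)\,n=m\,(b\cdot n)$ for $b\in M^*M$), so descends to the tensor product; it is a bimodule homomorphism, again by associativity, and lands in $MM^*$ respectively $M^*M$ by definition. The two Morita identities $\Psi(m\otimes n)\,m'=m\,\Phi(n\otimes m')$ and $\Phi(n\otimes m)\,n'=n\,\Psi(m\otimes n')$ are literally instances of associativity, and surjectivity is automatic since the images are exactly the spanning sets defining $MM^*$ and $M^*M$. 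The only place demanding genuine care is the multiplication rule and the attendant bookkeeping of the range conditions in each case; once the two absorption facts are in hand, the entire surjective Morita context is a formal consequence of associativity in $L_R(E)$.
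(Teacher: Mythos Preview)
Your proof is correct and follows essentially the same approach as the paper's: both hinge on the multiplication rule for spanning monomials $s_\mu s_{\nu^*}\cdot s_\alpha s_{\beta^*}$ (the outer ranges $r(\mu)$, $r(\beta)$ are preserved), both define $\Psi$ and $\Phi$ by multiplication, and both reduce the Morita identities to associativity in $L_R(E)$. Your packaging is slightly cleaner---isolating the absorption facts $ML_R(E)\subseteq M$ and $L_R(E)M^*\subseteq M^*$ up front and deducing everything formally from them, and using the factorisation $s_\mu s_{\nu^*}=(s_\mu s_{\mu^*})(s_\mu s_{\nu^*})$ where the paper instead writes $s_\mu s_{\nu^*}=p_{r(\mu)}p_{r(\mu)^*}s_\mu s_{\nu^*}$---but these are cosmetic differences, not a different route.
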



\begin{proof}  We have
\[
MM^*=\spn_R\{p_vs_\mu s_{\nu^*}s_\alpha s_\beta^*p_w: v,w\in V, \alpha, \beta,\mu, \nu\in E^*\}.
\] 
Products of the form $s_\mu s_{\nu^*}s_\alpha s_\beta^*$ are either zero or of the form $s_\mu s_\gamma s_{\delta^*}s_{\nu^*}=s_{\mu\gamma}s_{(\nu\delta)^*}$ for some $\gamma,\delta\in E^*$. Thus it is easy to see  
that $MM^*$  is a subalgebra of $L_R(E)$ and 
\[
MM^*=\spn_R\{p_vs_\mu s_{\nu^*}p_w: v,w\in V, \mu, \nu\in E^*\}=\spn\{s_\mu s_{\nu^*}: \mu, \nu\in E^*, r(\mu), r(\nu)\in V\}.
\]
Similarly,  $M^*M$  is an ideal. 

To see that $MM^*\subset M^*M$, take a spanning element $s_\mu s_{\nu^*}$ of $MM^*$. Then $r(\mu)\in V$, $s_\mu s_{\nu^*}\in M$, and  $s_\mu s_{\nu^*}=p_{r(\mu)}p_{r(\mu)^*}s_\mu s_{\nu^*}\in M^*M$. Thus $MM^*\subset M^*M$.

Since the module actions are given by multiplication in $L_R(E)$ it is easy to verify that $M$ is an $MM^*$--$M^*M$-bimodule and  $M^*$ is an $M^*M$--$MM^*$-bimodule.  The function
$f \colon M \times M^* \to MM^*$ defined by  $f(m,n)=mn$ is bilinear and $f(md,n)=f(m,dn)$ for all $d\in M^*M$. By the universal property of the balanced tensor product, there is a bimodule homomorphism $\Psi \colon M \otimes_{M^*M} M^* \to MM^*$ such that $\Psi(m \otimes n)=f(m,n)=mn$. Similarly, there is a bimodule homomorphism $\Phi \colon M^* \otimes_{MM^*} M \to M^*M$ such that $\Phi(n,m)=nm$.  Both $\Psi$ and $\Phi$ are surjective. Since multiplication in $L_R(E)$ is associative,  for $m, m' \in M, n, n' \in M^*$, we have
\[
m \Phi(n \otimes m')=mnm'=\Psi(m \otimes n) m'\quad\text{and}\quad
n \Psi (m \otimes n')=nmn'=\Phi(n \otimes m) n'.
\]
Thus $(MM^*, M^*M, M, M^*, \Psi, \Phi)$ is a surjective Morita context.
\end{proof}


In the situation of Theorem~\ref{main theorem}, we say a subset $V$ of $E^0$ is  \emph{full} if the ideal 
$M^*M$ is all of $L_R(E)$. We want a graph-theoretic characterisation of fullness, and so we want an the algebraic version of \cite[Lemma~2.2]{BP}.  We need some definitions.

For $v, w\in E^0$ we write $v\leq w$ if there is a path $\mu\in E^*$ such that $s(\mu)=w$ and $r(\mu)=v$.  We say a subset  $H$ of $E^0$ is \emph{hereditary} if $v\in H$ and $v\leq w$ implies $w\in H$. A hereditary subset $H$ of $E^0$ is \emph{saturated} if 
\[v\in E^0, 0<|r^{-1}(v)|<\infty\text{\ and\ }s(r^{-1}(v))\subset H\quad\Longrightarrow\quad v\in H.
\]
We denote by $\Sigma H(V)$ the smallest saturated hereditary subset of $E^0$ containing $V$. For a saturated hereditary subset $H$ of $E^0$ we write $I_H$ for the  ideal of $L_R(E)$ generated by $\{p_v: v\in H\}$. 


\begin{lemma} \label{fullness lemma}
Let $E$ be a directed graph, and let $V \subset E^0$. Then $V$ is full if and only if $\Sigma H(V)=E^0$.
\end{lemma}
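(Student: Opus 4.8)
The plan is to reduce fullness to a statement about vertex idempotents. Writing $I_V$ for the ideal of $L_R(E)$ generated by $\{p_v : v \in V\}$, I would first identify $M^*M$ with $I_V$, then show $I_V = I_{\Sigma H(V)}$, and finally appeal to the standard correspondence between saturated hereditary subsets and graded ideals to conclude that $I_{\Sigma H(V)} = L_R(E)$ precisely when $\Sigma H(V) = E^0$.

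For the identification $M^*M = I_V$: taking $\mu = \nu = v$ shows $p_v \in M \cap M^*$, so $p_v = p_v p_v \in M^*M$ for each $v \in V$; since $M^*M$ is an ideal by Theorem~\ref{main theorem}(2), it follows that $I_V \subseteq M^*M$. Conversely, a typical spanning element of $M^*M$ has the form $s_\mu s_{\nu^*} s_\alpha s_{\beta^*}$ with $r(\nu), r(\alpha) \in V$, and (L1) gives $s_\alpha = p_{r(\alpha)} s_\alpha$; as $p_{r(\alpha)} \in I_V$ and $I_V$ is an ideal, this element lies in $I_V$. Hence $M^*M = I_V$.

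For $I_V = I_{\Sigma H(V)}$ the inclusion $\subseteq$ is immediate from $V \subseteq \Sigma H(V)$. For the reverse I would consider $T := \{w \in E^0 : p_w \in I_V\}$ and check that $T$ is hereditary, saturated, and contains $V$; minimality of $\Sigma H(V)$ then gives $\Sigma H(V) \subseteq T$, so every $p_w$ with $w \in \Sigma H(V)$ lies in $I_V$ and $I_{\Sigma H(V)} \subseteq I_V$. That $V \subseteq T$ is clear. If $v \in T$ and $v \leq w$ via a path $\mu$ with $s(\mu) = w$ and $r(\mu) = v$, then the path version of (L2) gives $s_{\mu^*} s_\mu = p_{s(\mu)} = p_w$, and since $s_\mu = p_v s_\mu$ by (L1) we obtain $p_w = s_{\mu^*} p_v s_\mu \in I_V$, so $T$ is hereditary. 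If $0 < |r^{-1}(v)| < \infty$ and $s(r^{-1}(v)) \subseteq T$, then (L3) together with (L1) gives $p_v = \sum_{r(e)=v} s_e s_{e^*} = \sum_{r(e)=v} s_e p_{s(e)} s_{e^*} \in I_V$ (a finite sum of elements of $I_V$), so $T$ is saturated.

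Combining the two steps, $V$ is full if and only if $I_{\Sigma H(V)} = L_R(E)$, so it remains to show that for a saturated hereditary $H$ one has $I_H = L_R(E)$ if and only if $H = E^0$. The trivial direction is easy: if $H = E^0$, then every generator satisfies $s_e = p_{r(e)} s_e \in I_H$, whence $I_H = L_R(E)$. The genuinely nontrivial direction, which I expect to be the main obstacle, is that a proper saturated hereditary $H$ yields a proper ideal: one needs $p_w \notin I_H$ for $w \notin H$, equivalently that the quotient $L_R(E)/I_H$ is nonzero. This is precisely the content of the graded-ideal structure theory for Leavitt path algebras over a commutative ring, and I would invoke that correspondence from \cite{Tom} rather than reprove it here.
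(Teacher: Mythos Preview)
Your proof is correct and follows a cleaner but closely related route to the paper's. The main organizational difference is that you begin by identifying $M^*M = I_V$ explicitly, which reduces the whole question to standard facts about ideals generated by vertex idempotents; the paper instead works directly with $M^*M$ in one direction and with an arbitrary ideal containing $MM^*$ in the other. Your Step~2 (showing $T = \{w : p_w \in I_V\}$ is saturated and hereditary) is exactly the content of \cite[Lemma~7.6]{Tom}, which the paper simply cites. Both arguments ultimately rest on the same nontrivial external input---that for a saturated hereditary $H$ one has $p_w \in I_H$ only if $w \in H$---and you are right to flag this as the genuine obstacle; the paper uses it implicitly in its forward direction when it passes from $p_v \in I_{\Sigma H(V)}$ to $v \in \Sigma H(V)$. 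Your route has the advantage of isolating this dependence in a single transparent step at the end, while the paper's has the advantage of quoting \cite{Tom} rather than reproving part of it.
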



\begin{proof}
Let $R$ be a commutative ring with identity and let $\{p_v, s_e, s_{e^*}\}$ be a  universal generating Leavitt $E$-family in $L_R(E)$. As in Theorem~\ref{main theorem}, let $M=\spn_R\{s_{\mu}s_{\nu^*} \colon r(\mu) \in V\}$.

First suppose that $V$ is full, that is,  that $M^*M=L_R(E)$. To see that $\Sigma H(V)=E^0$, fix $v \in E^0$. Then $p_v \in M^*M$, and we can write $p_v$ as a  linear combination 
\[p_v=\sum_{(\alpha, \beta)\in F_1, (\mu, \nu)\in F_2 } r_{\alpha, \beta, \mu, \nu} s_{\alpha}s_{\beta^*}s_{\mu}s_{\nu^*}\]
where  $F_1, F_2$ are finite subsets of $E^*\times E^*$, and each $r_{\alpha, \beta, \mu, \nu}\in R$ and  $r(\beta)=r(\mu) \in V$. 

Since $\Sigma H(V)$ is a hereditary subset containing $V$, we have $s(\beta)$, $s(\mu) \in \Sigma H(V)$, and hence $p_{s(\beta)}, p_{s(\mu)} \in I_{\Sigma H(V)}$. Thus  each summand  
\[
s_{\alpha}s_{\beta^*}s_{\mu}s_{\nu^*}=s_{\alpha}p_{s(\alpha)}s_{\beta^*}s_{\mu}p_{s(\mu)}s_{\nu^*}\in I_{\Sigma H(V)}.
\] It follows that $p_v\in I_{\Sigma H(V)}$. Thus $v\in \Sigma H(V)$, and hence $E^0\subset\Sigma H(V)$. The reverse set inclusion is trivial. Thus $\Sigma H(V)=E^0$.

Conversely, suppose that $\Sigma H(V)=E^0$. To see that $V$ is full,  we need to show that the ideal  $M^*M$ is all of $L_R(E)$.  For this, suppose that $I$ is an ideal of $L_R(E)$ containing $MM^*$.  It suffices to show that $L_R(E)=I$: by Theorem~\ref{main theorem}, $M^*M$ is an ideal of $L_R(E)$ containing $MM^*$, and taking   $I=M^*M$ gives $L_R(E)=M^*M$, as needed.

By \cite[Lemma~7.6]{Tom}, the  subset $H_I := \{v \in E^0 \colon p_v \in I \}$ of $E^0$ is a saturated hereditary subset of $E^0$. Since $I$ contains $MM^*$, we have $p_v\in I$ for all $v\in V$. Thus $V\subset H_I$, and since $H_I$ is a saturated hereditary subset, we get $\Sigma H(V)\subset H_I$. By assumption, $\Sigma H(V)=E^0$, and 
now $L_R(E)=I_{E^0}=I_{\Sigma H(V)} \subset I_{H_I} \subset I\subset L_R(E)$. So $L_R(E)=I$ for any ideal $I$ containing $MM^*$.  Thus $V$ is full.
\end{proof}


\section{Contractible subgraphs of directed graphs}

We start by stating the algebraic version of Crisp and Gow's \cite[Theorem~3.1]{CG}; for this we need a few more definitions. 

Let $E$ be a directed graph. 
A finite path $\alpha=\alpha_1\alpha_2...\alpha_{|\alpha|}$  in $E$ with $|\alpha|\geq 1$ is a \emph{cycle} if
$s(\alpha) = r(\alpha)$ and $s(\alpha_i)\neq s(\alpha_j)$ when $i\neq j$. Then $E$ (respectively, a subgraph) is
\emph{acyclic} if it contains no cycles. An acyclic
infinite path $x=x_1x_2....$ in $E$ is a \emph{head} if each $r(x_i)$ receives only $x_i$ and each $s(x_i)$ emits only $x_i$. 

If $E$ has a head, we can get a new graph $F$ by collapsing the head down to a source. This is an example of a desingularisation,  and hence $L_R(F)$ and $L_R(E)$ are Morita equivalent by \cite[Proposition~5.2]{AAP}. Thus the ``no heads'' hypothesis in Theorem~\ref{CG theorem} below is not restrictive.


\begin{theorem} \label{CG theorem} Let $R$ be a commutative ring with identity, 
let $E$ be a directed graph with no heads,  and let $\{p_v, s_e, s_{e^*} \}$ be a universal generating Leavitt $E$-family in $L_R(E)$. Suppose that $G^0 \subset E^0$ contains the singular vertices of $E$. Suppose also that the subgraph $T$ of $E$ defined by 
\[
T^0 := E^0 \setminus G^0\quad\text{and}\quad T^1 :=\{e \in E^1 \colon s(e), r(e) \in T^0\}
\] 
is acyclic. Suppose that
\begin{enumerate}[(T1)]
\item{each vertex in $G^0$ is the range of at most one infinite path $x \in E^{\infty}$ such that $s(x_i) \in T^0$ for all $i \geq 1$.}
\end{enumerate}
Also suppose that for each $y \in T^{\infty}$,
\begin{enumerate}[(T1)]
  \setcounter{enumi}{1}
\item{there is a path from $r(y)$ to a vertex in $G^0$;}
\item{$|s^{-1}(r(y_i))|=1$ for all $i$; and}
\item{$e \in E^1$, $s(e)=r(y) \implies |r^{-1}(r(e))| < \infty$.}
\end{enumerate}
Let $G$ be the graph with vertex set $G^0$ and one edge $e_{\beta}$ for each $\beta \in E^* \setminus E^0$ with $s(\beta), r(\beta) \in G^0$ and $s(\beta_i) \in T^0$ for $1 \leq i < |\beta|$, such that $s(e_{\beta})=s(\beta)$ and $r(e_{\beta})=r(\beta)$.
Then $L_R(G)$ is Morita equivalent to $L_R(E)$.
\end{theorem}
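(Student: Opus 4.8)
My plan is to apply Theorem~\ref{main theorem} with $V=G^0$ and then identify the resulting corner with $L_R(G)$. Since $G^0$ contains every singular vertex of $E$, the first task is to verify that $G^0$ is full, so that Lemma~\ref{fullness lemma} gives $M^*M=L_R(E)$ and hence a Morita equivalence between $L_R(E)$ and $MM^*=\spn_R\{s_\mu s_{\nu^*}:r(\mu),r(\nu)\in G^0\}$; it will then remain to produce an isomorphism $L_R(G)\cong MM^*$, and composing with the Morita context of Theorem~\ref{main theorem} finishes the proof. For fullness I would show $\Sigma H(G^0)=E^0$ directly. Each vertex of $T^0$ is regular in $E$, so its tree of backwards paths (truncated at vertices of $G^0$) is finitely branching. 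When this tree is finite, an induction up it, with the sources of $E$ as the base case since these lie in $G^0$, puts the vertex in $\Sigma H(G^0)$ by repeated saturation; when it is infinite, K\"onig's lemma produces $y\in T^\infty$ with that vertex as its range, and (T2) then supplies a path to $G^0$, so the vertex lies in the hereditary closure of $G^0$. Either way every vertex of $T^0$ lies in $\Sigma H(G^0)$.

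To construct the isomorphism I would set $Q_w:=p_w$ for $w\in G^0$ and $S_{e_\beta}:=s_\beta$, $S_{e_\beta^*}:=s_{\beta^*}$ for each edge $e_\beta$ of $G$, and check that $\{Q_w,S_{e_\beta},S_{e_\beta^*}\}$ is a Leavitt $G$-family. Relations (L1) and (L2) are essentially formal; for (L2) the key observation is that if $e_\beta\neq e_\gamma$ then neither of the paths $\beta,\gamma$ is an initial segment of the other, because the interior source vertices of a $G$-edge lie in $T^0$ whereas $s(\beta),s(\gamma)\in G^0$, so $s_{\beta^*}s_\gamma=\delta_{\beta\gamma}p_{s(\beta)}$. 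The universal property of $L_R(G)$ then yields an $R$-algebra homomorphism $\pi\colon L_R(G)\to L_R(E)$ with $\pi(p_w)=p_w$ and $\pi(s_{e_\beta})=s_\beta$, whose image lies in $MM^*$.

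The crux is relation (L3). Fixing $w\in G^0$, I would repeatedly apply (L3) in $E$, expanding $p_w=\sum_{r(e)=w}s_es_{e^*}$ and re-expanding at each source vertex that lies in $T^0$; this traces out exactly the backwards-path tree above, whose leaves are the $G$-edges into $w$. Because the vertices involved are regular in $E$ the tree is finitely branching, so by K\"onig's lemma it is infinite precisely when it has an infinite branch, that is an infinite path $x$ with $r(x)=w$ and all $s(x_i)\in T^0$. By (T1) there is at most one such branch, and by (T3) its vertices have out-degree one. I would then argue that if such a branch exists, the absence of heads forces infinitely many side-branches that terminate in $G^0$, so $w$ receives infinitely many edges in $G$ and is singular there and (L3) is not required; while if no such branch exists the tree is finite and the fully iterated expansion is precisely $p_w=\sum_{r(e_\beta)=w}s_\beta s_{\beta^*}$. (This also shows a vertex singular in $E$ stays singular in $G$, so that (L3) need only be checked at vertices regular in $G$, which are automatically regular in $E$.)

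Surjectivity of $\pi$ onto $MM^*$ amounts to writing each spanning element $s_\mu s_{\nu^*}$ with $r(\mu),r(\nu)\in G^0$ in the image. Expanding $s_\mu s_{\nu^*}=s_\mu p_{s(\mu)}s_{\nu^*}$ backwards as before, a finite tree lets me factor $\mu$ and $\nu$ into $G$-edges directly; the remaining case, $s(\mu)\in T^0$ sitting on an infinite branch, is handled by using (T3) to follow the unique forward edge out of $s(\mu)$ toward $G^0$ and (T4) to guarantee that the terminal vertex is regular in $E$, so that (L3) at that vertex absorbs the non-terminating tail of the expansion. Injectivity I would obtain from the graded uniqueness theorem: grading $L_R(E)$ by $\deg s_e=1$ when $r(e)\in G^0$ and $\deg s_e=0$ otherwise gives each $s_\beta$ degree one, so $\pi$ is graded for the standard grading on $L_R(G)$, and since $\pi(rp_w)=rp_w\neq0$ for $r\neq0$ it is injective. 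The step I expect to be the main obstacle is relation (L3) together with the $T^0$-source case of surjectivity: both rest on the finiteness dichotomy for the backwards-path tree and on using (T1)--(T4) and the no-heads hypothesis to see that a non-terminating branch either renders the vertex singular in $G$ or can be closed off via regularity in $E$.
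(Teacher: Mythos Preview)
Your overall architecture matches the paper's: apply Theorem~\ref{main theorem} with $V=G^0$, show fullness, exhibit a Leavitt $G$-family $\{p_w,s_\beta,s_{\beta^*}\}$ in $L_R(E)$, and prove the induced map $L_R(G)\to MM^*$ is an isomorphism. For fullness the paper argues directly that $M^*M=L_R(E)$ (expanding $p_{s(\mu)}$ via the sets $B_{s(\mu)}$ and invoking (T2) when $B_{s(\mu)}$ is infinite), while you go through Lemma~\ref{fullness lemma}; the paper remarks that either route works. Your treatments of (L3) and of surjectivity onto $MM^*$ are sketches of exactly the Crisp--Gow arguments the paper cites, so no disagreement there.

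The one genuine divergence is injectivity. The paper explicitly observes that $\phi$ is \emph{not} graded for the standard $\mathbb{Z}$-grading on $L_R(E)$, and therefore abandons graded uniqueness in favour of the Reduction Theorem (Theorem~\ref{reduction theorem}) packaged as Lemma~\ref{reduction theorem lemma}. Your idea sidesteps this: put the nonstandard $\mathbb{Z}$-grading on $L_R(E)$ with $\deg s_e=1$ if $r(e)\in G^0$ and $\deg s_e=0$ otherwise (the Leavitt relations are homogeneous for any assignment $E^1\to\mathbb{Z}$, so this is a genuine grading), and then each $s_\beta$ has degree~$1$ because $r(\beta_1)\in G^0$ while $r(\beta_i)=s(\beta_{i-1})\in T^0$ for $i\ge 2$. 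Thus $\pi$ \emph{is} graded for this grading on the codomain and the standard one on $L_R(G)$, and the graded uniqueness theorem applies. This is correct and is arguably cleaner than the paper's route, since it avoids importing the Reduction Theorem; on the other hand, the paper's Lemma~\ref{reduction theorem lemma} is a reusable injectivity criterion that does not require identifying an auxiliary grading, which may be convenient in situations where no such grading is apparent.
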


In words, the new graph $G$ of Theorem~\ref{CG theorem} is obtained 
by replacing each path $\beta \in E^*$ with $s(\beta), r(\beta)$ in $G^0$ of length at least $1$  which passes through $T$ by a single edge $e_{\beta}$, which has the same source and range as $\beta$. Note that the edges $e$ in $E$  with $r(e)$ and $s(e)$ in $G^0$ remain  unchanged.

Let $v \in E^0$. As in \cite{CG}
define 
\[
B_v=\{ \beta \in E^* \setminus E^0 \colon r(\beta)=v, s(\beta) \in G^0 \text{ and } s(\beta_i) \in T^0 \text{ for } 1 \leq i < |\beta|\}.
\]
Then  $\bigcup_{w \in G^0} B_w$ of $E^*$ corresponds to the set of edges $G^1$ in $G$.


To prove Theorem~\ref{CG theorem}, we apply  Theorem~\ref{main theorem} with $V=G^0$ so that \[M= \spn_R \{s_{\mu}s_{\nu^*} \colon r(\mu) \in G^0\}.\]
Then $M^*M$ is an ideal of $L_R(E)$ containing the subalgebra $MM^*$, and $M^*M$ and $MM^*$ are Morita equivalent. 
We need to show that $M^*M=L_R(E)$ and that $MM^*$ is isomorphic to $L_R(G)$.  Our proof uses quite a few of the arguments from Crisp and Gow's proof of \cite[Theorem~3.1]{CG}. 
In particular, Lemma~3.6 of \cite{CG} gives a Cuntz-Krieger  $G$-family in $C^*(E)$, and since the proof is purely algebraic it also gives a Leavitt $G$-family in $L_R(E)$.  The universal property of $L_R(G)$ then gives a unique homomorphism $\phi \colon L_R(G) \to L_R(E)$. Crisp and Gow used the gauge-invariant uniqueness theorem to show that their $C^*$-homomorphism is one-to-one.  The analogue here would be the graded uniqueness theorem, however $\phi$ is not graded. Instead,  to show $\phi$ is one-to-one, we adapt some clever arguments from the proof of \cite[Proposition~5.1]{AAP} in Lemma~\ref{reduction theorem lemma} below  which uses a 
reduction theorem.


\begin{theorem}[Reduction Theorem]
\label{reduction theorem}
Let $R$ be a commutative ring with identity, let $E$ be a directed graph, and let $\{p_v, s_e, s_{e^*}\}$ be a  universal Leavitt $E$-family in $L_R(E)$. Suppose that $0 \neq x \in L_R(E)$. There exist $\mu, \nu \in E^*$ such that either:
\begin{enumerate}
\item\label{red1} for some $v \in E^0$ and $0 \neq r \in R$ we have $0 \neq s_{\mu^*}x s_{\nu}=rp_v$, or
\item\label{red2} there exist $m, n \in \mathbb{Z}$ with $m \leq n$, $r_i \in R$, and a non-trivial cycle $\alpha \in E^*$ such that $0 \neq s_{\mu^*}x s_{\nu}=\sum_{i=m}^n r_i s_{\alpha}^i$.  (If $i$ is negative, then $s_{\alpha}^i:=s_{\alpha^*}^{|i|}$.)
\end{enumerate}
\end{theorem}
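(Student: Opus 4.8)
The plan is to establish the Reduction Theorem by first normalising an arbitrary nonzero element of $L_R(E)$ and then running an induction that successively strips off ``long'' ghost/path factors until only a manageable core remains. Since $L_R(E)=\lsp_R\{s_\mu s_{\nu^*}:\mu,\nu\in E^*\}$, I would write $0\neq x=\sum_i r_i s_{\mu_i}s_{\nu_i^*}$ as a finite $R$-linear combination. The first key reduction is a \emph{leveling} step: by multiplying on the left by a suitable $s_{\lambda^*}$ and on the right by a suitable $s_\tau$, and using (L1)--(L2) to collapse $s_{\mu^*}s_\nu$ to $\delta$-terms, one can arrange that every surviving monomial $s_\alpha s_{\beta^*}$ has $|\alpha|$ and $|\beta|$ bounded, and indeed that all surviving terms share a common source/range vertex so that $s_{\mu^*}xs_\nu$ lives in the ``diagonal-like'' corner $p_v L_R(E) p_v$.

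Next I would carry out the \emph{main induction}, reducing the maximal length $\max_i(|\mu_i|+|\nu_i|)$ appearing among the monomials. The idea, adapted from the classical Leavitt path algebra reduction theorem, is to pick a monomial $s_\mu s_{\nu^*}$ of maximal length and look at the vertex $v=s(\mu)$; if $v$ is a regular vertex, one can use (L3) to expand $p_v=\sum_{r(e)=v}s_es_{e^*}$ and then multiply by $s_{e^*}$ on the appropriate side to shorten the longest term while controlling what happens to the others, using (L2) to kill mismatched edges. If $v$ is singular, one instead multiplies by $s_{e^*}$ for a single edge $e$ to peel one edge off. Iterating, one drives the element down either to a scalar multiple of a single vertex projection $rp_v$ (case~\eqref{red1}), or, when the shortening process cycles back on itself, to a sum supported on powers of a single path, which forces the path to be a cycle and yields the form $\sum_{i=m}^n r_i s_\alpha^i$ (case~\eqref{red2}).

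The dichotomy between \eqref{red1} and \eqref{red2} arises from the structure of the ``source graph'' generated by the leveling: if the iterated shortening never revisits a vertex, the process terminates at a single projection; if it revisits a vertex, one has located a genuine cycle $\alpha$, and the remaining terms must all be of the form $s_\alpha^i$ (positive powers paired with the forward path, negative with the ghost path), giving the Laurent-polynomial-in-$s_\alpha$ shape. I would track carefully that the nonzero coefficient survives each reduction, i.e.\ that at no stage does the element collapse to $0$, which is guaranteed because each reduction step is implemented by multiplication by generators whose effect on the distinguished maximal monomial is invertible on its corner.

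The main obstacle, I expect, is the bookkeeping in the leveling and induction: when expanding $p_v$ via (L3) or multiplying by a single $s_{e^*}$, one must ensure that the \emph{longest} monomial genuinely shortens while no \emph{new} longer monomials are created and while the element stays nonzero. Handling regular versus singular vertices separately, and correctly identifying when the shortening procedure forces a cycle (the only way the length fails to decrease to zero), is the delicate part. Rather than re-prove this from scratch, I would lean on the established Leavitt path algebra reduction theorem in the literature; the statement here is exactly the standard one, so the cleanest route is to cite it and, if needed, indicate that the argument goes through verbatim over an arbitrary commutative ring $R$ with identity.
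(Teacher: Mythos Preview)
Your final recommendation---cite the standard reduction theorem from the literature and note that the proof carries over verbatim to a commutative ring with identity---is exactly what the paper does: it cites \cite[Proposition~3.1]{AMMS} and states that the same argument works over $R$. The preceding from-scratch sketch is not needed (and some of its steps, e.g.\ how nonvanishing is preserved and what to do at a source, would require more care), but since you ultimately defer to the known result your proposal aligns with the paper's proof.
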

\begin{proof} For Leavitt path algebras over a field this is proved in \cite[Proposition~3.1]{AMMS}.
 We checked carefully that the same proof works over a commutative ring $R$ with identity.
\end{proof}

\begin{lemma} \label{reduction theorem lemma} Let $R$ be a commutative ring with identity.
Let $E$ and $G$ be directed graphs, and let $\phi \colon L_R(G) \to L_R(E)$ be an $R$-algebra $*$-homomorphism. Denote by $\{p_v, s_e, s_{e^*}\}$ and $\{q_v, t_e, t_{e^*}\}$  universal Leavitt $E$- and $G$-families in $L_R(E)$ and $L_R(G)$, respectively. Suppose that 
\begin{enumerate}
\item\label{rtl-1} for all $v \in G^0$, $\phi(q_v)=p_{v'}$ for some $v' \in E^0$; and 
\item\label{rtl-2} for all $e \in G^1$, $\phi(t_e)=s_{\beta}$ for some $\beta \in E^*$ with $|\beta| \geq 1$.
\end{enumerate}
Then $\phi$ is injective.
\end{lemma}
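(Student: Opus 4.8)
The plan is to prove injectivity of $\phi$ by contradiction using the Reduction Theorem. Suppose $0 \neq x \in \ker \phi$. By Theorem~\ref{reduction theorem}, after cutting down by suitable $t_\mu, t_\nu$ (ghost) paths in $G$, we reduce $x$ to one of two standard forms: either $t_{\mu^*} x t_\nu = r q_v$ with $0 \neq r \in R$ and $v \in G^0$, or $t_{\mu^*} x t_\nu = \sum_{i=m}^n r_i t_\alpha^i$ for a nontrivial cycle $\alpha$ in $G$. Since $x \in \ker \phi$ and $\ker \phi$ is an ideal (indeed $\phi$ is an algebra homomorphism, so $\phi(t_{\mu^*} x t_\nu)=\phi(t_{\mu^*})\phi(x)\phi(t_\nu)=0$), both reduced elements also lie in $\ker\phi$. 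The strategy is to show that $\phi$ is nonzero on each of these reduced forms, contradicting that they are in the kernel.

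First I would handle case~(\ref{red1}). Here $\phi(r q_v)=r\,\phi(q_v)=r\,p_{v'}$ by hypothesis~(\ref{rtl-1}). To conclude this is nonzero I need that $r p_{v'} \neq 0$ in $L_R(E)$ whenever $r \neq 0$ in $R$; this follows from the standard fact that the vertex idempotents $p_{v'}$ are $R$-linearly independent in $L_R(E)$ (each $R p_{v'}$ is a free $R$-module, a consequence of the grading or of the explicit description of $L_R(E)$ as having $\{s_\mu s_{\nu^*}\}$ an $R$-basis modulo the defining relations). Hence $\phi(t_{\mu^*} x t_\nu)=r p_{v'}\neq 0$, contradicting $t_{\mu^*} x t_\nu \in \ker\phi$.

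The harder case is~(\ref{red2}), the cycle case. Applying $\phi$ and using the $*$-homomorphism property together with hypotheses~(\ref{rtl-1}) and~(\ref{rtl-2}), I compute
\[
\phi\Bigl(\sum_{i=m}^n r_i t_\alpha^i\Bigr)=\sum_{i=m}^n r_i\,\phi(t_\alpha)^i,
\]
where $\phi(t_\alpha)=s_{\beta_1}\cdots s_{\beta_k}=s_\gamma$ is $s_\gamma$ for the concatenation $\gamma$ of the images $\beta_j$ of the edges $\alpha_j$ in the cycle $\alpha$. The crucial point is that $\gamma$ is again a genuine path in $E$ with $s(\gamma)=r(\gamma)$ of length $|\gamma|=\sum_j|\beta_j|\geq |\alpha|\geq 1$, so $\phi(t_\alpha)=s_\gamma$ with $\gamma$ a nonempty \emph{closed} path, and in particular $s_{\gamma^*}s_\gamma=p_{s(\gamma)}\neq 0$. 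The image $\sum_i r_i s_\gamma^i$ is a nonzero Laurent polynomial in $s_\gamma$ supported on a closed path; to see it is nonzero I would apply the Reduction Theorem (or a direct $R$-linear independence argument) inside $L_R(E)$: cutting down $\sum_i r_i s_\gamma^i$ by powers of $s_\gamma$ and $s_{\gamma^*}$ isolates an individual coefficient $r_j p_{s(\gamma)}$, which is nonzero by the vertex-independence fact above. I expect this last step—showing the image Laurent polynomial in $s_\gamma$ does not collapse to zero—to be the main obstacle, since one must verify that the monomials $s_\gamma^i$ remain $R$-independent (no unexpected cancellation from $\gamma$ being a possibly \emph{non-simple} closed path rather than a cycle in $E$), which is precisely where the structure of closed paths in $L_R(E)$ and the Reduction Theorem applied in $L_R(E)$ do the work. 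In either case we reach $\phi(t_{\mu^*} x t_\nu)\neq 0$, contradicting $x \in \ker\phi$, so $\ker\phi=0$ and $\phi$ is injective.
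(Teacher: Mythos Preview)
Your overall strategy matches the paper's: reduce via Theorem~\ref{reduction theorem}, then derive a contradiction in each of the two cases. Case~(\ref{red1}) is handled exactly as in the paper and is fine.

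In case~(\ref{red2}) you correctly identify that the crux is showing $\sum_{i=m}^n r_i s_\gamma^i \neq 0$ in $L_R(E)$ when not all $r_i$ vanish, where $\gamma = \phi(t_\alpha)$ is a closed path of length $\geq 1$. However, your proposed method---``cutting down by powers of $s_\gamma$ and $s_{\gamma^*}$ isolates an individual coefficient $r_j p_{s(\gamma)}$''---does not work. Since $s_{\gamma^*}s_\gamma = p_{s(\gamma)}$ and $\gamma$ is closed, left multiplication by $s_{\gamma^*}$ (or right multiplication by $s_\gamma$) simply shifts \emph{all} exponents by one; no single term is isolated. Invoking the Reduction Theorem in $L_R(E)$ is also circular here, since that theorem applies only to elements already known to be nonzero.

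The paper resolves this cleanly using the $\mathbb{Z}$-grading on $L_R(E)$: if $|\gamma|=k\geq 1$, then $s_\gamma^i$ is homogeneous of degree $ik$, so the terms $r_i s_\gamma^i$ lie in pairwise distinct graded components. Hence $\sum_i r_i s_\gamma^i = 0$ forces each $r_i s_\gamma^i = 0$; since $(s_{\gamma^*})^i s_\gamma^i = p_{s(\gamma)}$, this gives $r_i p_{s(\gamma)} = 0$ and thus $r_i = 0$ for all $i$, contradicting $\sum_i r_i t_\alpha^i \neq 0$. Replacing your cutting-down step with this grading argument closes the gap.
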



\begin{proof}
We follow an argument made in \cite[Proposition~5.1]{AAP}.
Let $x \in \ker \phi$. Aiming for a contradiction, suppose that $x \neq 0$. By Theorem~\ref{reduction theorem} there exist $\mu, \nu \in G^*$ such that either condition \eqref{red1} or \eqref{red2} of the theorem holds. 

First suppose that \eqref{red1} holds, that is, there exist $v \in G^0$ and $0 \neq r \in R$ such that $0 \neq t_{\mu^*}x t_{\nu}=rq_v$. Using assumption \eqref{rtl-1}, there exists $v'\in E^0$ such that $\phi(q_v)=p_{v'}$. Now
\[
0=\phi(t_{\delta^*}x t_{\gamma})=\phi(rq_v)=r\phi(q_v)=rp_{v'}.
\]
But $rp_{v'} \neq 0$ for all $0 \neq r \in R$, giving a   contradiction.

Second suppose that \eqref{red2} holds, that is, there exist $m, n \in \mathbb{Z}$ with $m \leq n$, $r_i \in R$, and a non-trivial cycle $\alpha \in E^*$ such that $0 \neq s_{\mu^*}x s_{\nu}=\sum_{i=m}^n r_i s_{\alpha}^i$.
 Since $\alpha$ is a non-trivial cycle, it has length at least $1$. By assumption \eqref{rtl-2}, $\phi(t_{\alpha})=s_{\alpha'}$ where $\alpha'$ is a path in $E$ such that $|\alpha'|_{E} \geq |\alpha|_G \geq 1$. 
Since $\phi$ is an $R$-algebra $*$-homomorphism we get
\[
0=\phi(t_{\mu^*}x t_{\nu})=\phi\Big(\sum_{i=m}^n r_i t_{\alpha}^i\Big)
=\sum_{i=m}^n r_i \phi(t_{\alpha})^i
=\sum_{i=m}^n r_i s_{\alpha'}^i.
\]
Since $|\alpha'|=k$ for some $k \geq 1$, $s_{\alpha'}$ has grading $k$, and hence each $s_{\alpha'}^i$ has grading $ik$. Thus each term in the sum $\sum_{i=m}^n r_i s_{\alpha'}^i$ is in a distinct graded component. But since $s_{\alpha'} \neq 0$, we must have $r_i=0$ for all $i$. Thus $\sum_{i=m}^n r_i t_{\alpha}^i=0$, which is a contradiction. In either case, we obtained a contradiction to the assumption that $x \neq 0$. Thus $x=0$ and $\phi$ is injective. 
\end{proof}


\begin{proof}[Proof of Theorem~\ref{CG theorem}]   Let $\{p_v, s_e, s_{e^*}\}$ be a universal Leavitt $E$-family in $L_R(E)$. We apply Theorem \ref{main theorem} with $V=G^0$
to get a surjective Morita context between $MM^*$ and $M^*M$.

Since $M$ and $M^*\subset L_R(E)$, we have $M^*M \subset L_R(E)$.   To see that $L_R(E) \subset M^*M$, let $s_{\mu}s_{\nu^*} \in L_R(E)$. We may assume that  $s(\mu)=s(\nu)$, for otherwise $s_{\mu}s_{\nu^*}=0$. If $s(\mu) \in G^0$, then the Leavitt $E$-family relations give $s_{\mu}s_{\nu^*}=s_{\mu}s_{s(\mu)^*}s_{s(\mu)}s_{\nu^*} \in M^*M$ and we are done.
So suppose $s(\mu) \in T^0$.  Then the graph-theoretic \cite[Lemma~3.4(c)]{CG} implies that $B_{s(\mu)} \neq \emptyset$.
Suppose first that $B_{s(\mu)}$ is finite.  It then follows from the first part of \cite[Lemma~3.6]{CG}  that $s(\mu)$ is a nonsingular vertex.  The second part of \cite[Lemma~3.6]{CG} implies that for any Cuntz-Krieger $E$-family $\{P_v, S_e, S_{e^*}\}$ in $C^*(E)$, 
\[
P_{s(\mu)}=\sum_{\beta \in B_{s(\mu)}} S_{\beta}S_{\beta^*};
\]
the  proof is purely algebraic and works for any Leavitt $E$-family in $L_R(E)$. 
Thus
\[
s_{\mu}s_{\nu^*}=s_{\mu}p_{s(\mu)}s_{\nu^*}=\sum_{\beta \in B_{s(\mu)}} s_{\mu}s_{\beta}s_{\beta^*}s_{\nu^*}=\sum_{\beta \in B_{s(\mu)}} s_{\mu \beta}s_{s(\beta)^*}s_{s(\beta)}s_{(\nu \beta)^*} \in M^*M.
\]
Next suppose that $B_{s(\mu)}$ is infinite.  Since $s(\mu)\in T^0$ and $B_{s(\mu)}$ is infinite,  the graph-theoretic \cite[Lemma~3.4(d)]{CG} implies that  there exists $x \in T^{\infty}$ such that $s(\mu)=r(x)$. By assumption (T2), there is a path $\alpha \in E^*$ with $r(\alpha)\in G^0$ such that $s(\alpha)=r(x)=s(\mu)$. Now
\[
s_{\mu}s_{\nu^*}=s_{\mu}p_{s(\mu)}s_{\nu^*}=s_{\mu}s_{\alpha^*}s_{\alpha}s_{\nu^*} \in M^*M.
\]
Thus $L_R(E)=M^*M$. (We could have used Lemma~\ref{fullness lemma} to prove that $L_R(E)=M^*M$, as Crisp and Gow do, but this seemed easier.)

Next we show that $L_R(G)$ and $M^*M$ are isomorphic. For $v \in G^0$ and $\beta \in \cup_{w \in G^0} B_w$
define
\[
Q_v=p_v, \quad
T_{e_{\beta}}=s_{\beta}\quad\text{ and }
T_{e_{\beta}^*}=s_{\beta^*}.
\]
Then $\{Q_v, T_e, T_{e^*}\}$ is a Leavitt $G$-family in $L_R(E)$; again this follows as in the proof of \cite[Theorem~3.1]{CG}. To see what is involved, we briefly step through this. Relations (L1)  follow immediately from the relations for $\{p_v, s_e, s_{e^*}\}$. 
To see that (L2) holds, let $\gamma, \beta \in \cup_{w \in G^0} B_w$. Then
$T_{e_{\beta}^*}T_{e_{\gamma}}=s_{\beta^*}s_{\gamma}$. By the graph-theoretic \cite[Lemma~3.4(a)]{CG} neither $\gamma$ nor $\beta$ can be a proper extension of the other. Thus $T_{e_{\beta}^*}T_{e_{\gamma}}=s_{\beta^*}s_{\gamma}=\delta_{\beta, \gamma} p_{s(\beta)}=\delta_{e_{\beta}, e_{\gamma}} Q_{s(e_{\beta})}$, and (L2) holds.

To see that (L3) holds, let $v \in G^0$ be a non-singular vertex. 
Then $B_v$ is finite and non-empty because it is equinumerous with $r_G^{-1}(v)$. Using the algebraic analogue of  \cite[Lemma~3.6]{CG} again, we have
\[Q_v=p_v=\sum_{\beta \in B_v} s_{\beta}s_{\beta^*}= \sum_{e_{\beta} \in r_G^{-1}(v)} T_{e_{\beta}}T_{e_{\beta}^*}.\]
Thus (L3) holds and $\{Q_v, T_e, T_{e^*}\}$ is a Leavitt $G$-family in $L_R(E)$. 

Now let $\{q_v, t_e, t_{e^*}\}$ be a universal Leavitt $G$-family in $L_R(G)$. The universal property of $L_R(G)$ gives a unique homomorphism $\phi \colon L_R(G) \to L_R(E)$ such that for $v \in G^0$, $\beta \in \cup_{w \in G^0} B_w$,
\[
\phi(q_v)=Q_v=p_v,\quad
\phi(t_{e_{\beta}})=T_{e_{\beta}}=s_{\beta}\quad \text{and}\quad
\phi(t_{e_{\beta}^*})=T_{e_{\beta}^*}=s_{\beta^*}.
\]

If $v\in G^0$, then $p_v=s_{v}s_{v^*}\in MM^*$; if $\beta\in B_w$ for some $w\in G^0$, then $r(\beta)\in G^0$, and $s_{\beta}=s_{\beta}s_{s(\beta)^*}$ and $s_{\beta^*}=s_{s(\beta)}s_{\beta^*} \in MM^*$. It follows that the range of $\phi$ is contained in $MM^*$.  That $\phi$ is onto $MM^*$ again follows from  work of Crisp and Gow. They
take a non-zero spanning element  $s_\mu s_{\nu^*}\in MM^*$ and use the graph-theoretic \cite[Lemma~3.4(b)]{CG}, the algebraic \cite[Lemma~3.6]{CG} and assumptions (T1)-(T4) to show that $s_\mu s_{\nu^*}$ is in the range of $\phi$.  Thus $\phi$ is onto.

Finally, $\phi$ satisfies the hypotheses of Lemma~\ref{reduction theorem lemma}, and hence is one-to-one. Thus $\phi$ is an isomorphism of $L_R(G)$ onto $MM^*$.
\end{proof}

\begin{remark} A version of Theorem~\ref{main theorem} should hold for the Kumjian-Pask algebras associated to  locally convex or finitely aligned $k$-graphs \cite{CFH, CP}. But the challenge would be to formulate an appropriate notion of contractible subgraph in that setting. 
\end{remark}


\section{Examples}

As mentioned in the introduction, the setting of Theorem~\ref{CG theorem} includes  many known examples. We found it helpful to see how some concrete examples fit.


\begin{example}\label{Collapsible paths example} An infinite path $x=x_1x_2\dots$ in a directed graph is \emph{collapsible} if $x$ has no exits except at $r(x)$, the set $r^{-1}(r(x_i))$ of edges is finite for every $i$, and $r^{-1}(r(x))=\{x_1\}$ (see \cite[Chapter~5]{R}). Consider the following row-finite directed graph $E$: 
\begin{center}
\begin{tikzpicture}
\tikzset{edge/.style = {->,> = latex'}}
\node[inner sep=0.5pt, circle] (m0) at  (0,0) {$v_0$};
\node[inner sep=0.5pt, circle] (x) at  (1.5,1.5) {$v_1$};
\node[inner sep=0.5pt, circle] (y) at  (3,1.5) {$v_2$};
\node[inner sep=0.5pt, circle] (z) at  (4.5,1.5) {$v_3$};
\node[inner sep=0.5pt, circle] (a) at  (6,1.5) {$v_4$};
\node[inner sep=0.5pt, circle] (b) at  (7,1.5) {\dots};
\node[inner sep=0.5pt, circle] (m1) at (1.5,0) {};
\node[inner sep=0.5pt, circle](m2) at (3,0) {};
\node[inner sep=0.5pt, circle] (m3) at (4.5,0) {};
\node[inner sep=0.5pt, circle] (m4) at (6,0) {};
\node[inner sep=0.5pt, circle] (m5) at (7,0) {\dots};
\draw[edge] (m1) to [xshift=0pt,yshift=2pt] node[pos=0.5,below]{$x_1$} (m0);
\draw[edge] (m2) to [xshift=0pt,yshift=2pt] node[pos=0.5,below]{$x_2$} (m1);
\draw[edge] (m3) to [xshift=0pt,yshift=2pt] node[pos=0.5,below]{$x_3$} (m2);
\draw[edge] (m4) to [xshift=0pt,yshift=2pt] node[pos=0.5,below]{$x_4$} (m3);
\node [shape=circle,minimum size=1.5em] (m5) at (7.5,0) {};
\draw[edge] (x)  to (m1);
\draw[edge] (m0) to (x);
\draw[edge] (y) to (m2);
\draw[edge] (z) to (m3);
\draw[edge] (z) to (m2);
\draw[edge] (a) to (m4);
\draw[edge] (a) to (m3);
\end{tikzpicture}
\end{center}
The infinite path $x=x_1x_2 \cdots$  is collapsible. 
When we collapse  $x$ to the vertex $v_0$, as described in \cite[Proposition~5.2]{R}, we get the following graph $F$ with infinite receiver $v_0$:
\begin{center}
\begin{tikzpicture}
\tikzset{vertex/.style = {shape=circle,draw,minimum size=1.5em}}
\tikzset{edge/.style = {->,> = latex'}}
\node[inner sep=0.5pt, circle] (m0) at  (0,0) {$v_0$};
\node[inner sep=0.5pt, circle]  (x) at  (1.5,1.5) {$v_1$};
\node[inner sep=0.5pt, circle]  (y) at  (3,1.5) {$v_2$};
\node[inner sep=0.5pt, circle]  (z) at  (4.5,1.5) {$v_3$};
\node [shape=circle,minimum size=1.5em] (m5) at (7.5,0) {};
\path (m5) to node {\dots} (z);

\draw[edge] (x)  to[bend left] (m0);
\draw[edge] (m0) to[bend left] (x);

\draw[edge] (y) to[bend left] (m0);
\draw[edge] (z) to[bend left] (m0);
\draw[edge] (z) to[bend left=40] (m0);
\end{tikzpicture}
\end{center}

This fits the setting of Theorem~\ref{CG theorem}: take $G^0=\{v_i\colon i\geq 0 \}$, and then $T$ is the subgraph defined by $T^0=\{s(x_i) \colon i \geq 1\}$ and $T^1=\{x_i \colon i \geq 2\}$. Then $T$ contains none of the singular vertices $\{v_i\colon i\geq 2\}$ of $E$, is acyclic, and satisfies the conditions (T1)--(T4). Thus $F$ is the graph $G$ described in the theorem.
\end{example}

\begin{example}\label{Desingularisation example}
Consider the directed graph $F$ with source $w$ and infinite receiver $v$:
\begin{center}
\begin{tikzpicture}
\tikzset{vertex/.style = {shape=circle,draw,minimum size=1.5em}}
\tikzset{edge/.style = {->,> = latex'}}
\node[inner sep=0.5pt, circle] (v) at  (0,0) {$v$};
\node[inner sep=0.5pt, circle] (w) at  (3,0) {$w$};
\draw[edge] (w) to [xshift=0pt,yshift=2pt] node[pos=0.5,above]{$\infty$} (v);
\end{tikzpicture}
\end{center}
An example of a  Drinen-Tomforde desingularisation \cite{DT} of $F$ is the row-finite graph $E$ with no sources below on the left:   a head has been added at the source $w$  of $F$ and each edge from $w$ to $v$ in $F$ has been replaced with paths as shown. (This desingularisation is also an example of an out-delay.) Then, since we are interested in Morita equivalence, we delete the head at $w$ to get the graph $E$ below on the right. 
\begin{center}
\begin{tikzpicture}
\tikzset{vertex/.style = {shape=circle,draw,minimum size=1.5em}}
\tikzset{edge/.style = {->,> = latex'}}
\node[inner sep=0.5pt, circle]  (v) at  (0,0) {$v$};
\node[inner sep=0.5pt, circle]  (v1) at  (0,-2) {};
\node[inner sep=0.5pt, circle]  (v2) at  (0,-4) {};
\node[inner sep=0.5pt, circle]  (v3) at  (0,-6) {};
\node[inner sep=0.5pt, circle] (w) at  (3,0) {$w$};
\node[inner sep=0.5pt, circle] (w1) at  (5,0) {};
\node[inner sep=0.5pt, circle] (w2) at  (7,0) {};
\node[inner sep=0.5pt, circle] (w3) at  (8,0) {};
\draw[edge] (w) to [xshift=0pt,yshift=2pt] node[pos=0.5,above]{} (v);
\draw[edge] (w) to [xshift=0pt,yshift=2pt] node[pos=0.5,above]{} (v1);
\draw[edge] (w) to [xshift=0pt,yshift=2pt] node[pos=0.5,above]{} (v2);
\draw[edge] (w) to [xshift=0pt,yshift=2pt] node[pos=0.5,above]{} (v3);
\draw[edge] (v1) to [xshift=0pt,yshift=2pt] node[pos=0.5,above]{} (v);
\draw[edge] (v2) to [xshift=0pt,yshift=2pt] node[pos=0.5,above]{} (v1);
\draw[edge] (v3) to [xshift=0pt,yshift=2pt] node[pos=0.5,above]{} (v2);
\draw[edge] (w1) to [xshift=0pt,yshift=2pt] node[pos=0.5,above]{} (w);
\draw[edge] (w2) to [xshift=0pt,yshift=2pt] node[pos=0.5,above]{} (w1);
\node [shape=circle,minimum size=1.5em] (w4) at (11,0) {};
\path (w3) to node {\dots} (w2);
\node [shape=circle,minimum size=1.5em] (v4) at (0,-7) {};
\path (v4) to node {\vdots} (v3);
\end{tikzpicture}
\begin{tikzpicture}
\tikzset{vertex/.style = {shape=circle,draw,minimum size=1.5em}}
\tikzset{edge/.style = {->,> = latex'}}
\node[inner sep=0.5pt, circle] (v) at  (0,0) {$v$};
\node[inner sep=0.5pt, circle](v1) at  (0,-2) {};
\node[inner sep=0.5pt, circle] (v2) at  (0,-4) {};
\node[inner sep=0.5pt, circle](v3) at  (0,-6) {};
\node[inner sep=0.5pt, circle] (w) at  (3,0) {$w$};

\draw[edge] (w) to [xshift=0pt,yshift=2pt] node[pos=0.5,above]{} (v);
\draw[edge] (w) to [xshift=0pt,yshift=2pt] node[pos=0.5,above]{} (v1);
\draw[edge] (w) to [xshift=0pt,yshift=2pt] node[pos=0.5,above]{} (v2);
\draw[edge] (w) to [xshift=0pt,yshift=2pt] node[pos=0.5,above]{} (v3);
\draw[edge] (v1) to [xshift=0pt,yshift=2pt] node[pos=0.5,above]{} (v);
\draw[edge] (v2) to [xshift=0pt,yshift=2pt] node[pos=0.5,above]{} (v1);
\draw[edge] (v3) to [xshift=0pt,yshift=2pt] node[pos=0.5,above]{} (v2);

\node [shape=circle,minimum size=1.5em] (v4) at (0,-7) {};
\path (v4) to node {\vdots} (v3);
\end{tikzpicture}
\end{center}

Set $T^0 =E^0 \setminus \{v,w\}$ and 
$
T^1 =\{e \in E^1 \colon s(e), r(e) \in T^0\}.
$ Then the subgraph $T$ contains none of the singularities of $E$,
is acyclic, and satisfies conditions (T1)--(T4) of Theorem~\ref{CG theorem}. The graph $F$ we started with is the graph $G$ of Theorem~\ref{CG theorem}.
\end{example}


\begin{example}
Consider again the graph $F$ of Example~\ref{Desingularisation example}. Label the infinitely many edges from $w$ to $v$ by $e_i$ for $i\geq 1$.  This time we will consider the in-delayed graph $d_s(E)$ given by the Drinen source-vector $d_s \colon E^0 \cup E^1 \to \mathbb{N} \cup \{\infty\}$ (see \cite[Section~4]{BP}) to be the function defined by
$d_s(e_i)=i-1$ for $i\geq 1$, $d_s(v)= 0$ and $d_s(w)=\infty$.
Then the in-delayed graph $d_s(E)$ given by $d_s$, as described in \cite{BP}, is
\begin{center}
\begin{tikzpicture}
\tikzset{edge/.style = {->,> = latex'}}
\node[inner sep=0.5pt, circle]  (v) at  (0,0) {$v^0$};
\node[inner sep=0.5pt, circle]  (w) at  (3,0) {$w^0$};
\node[inner sep=0.5pt, circle]  (w1) at  (3,-2) {$w^1$};
\node[inner sep=0.5pt, circle]  (w2) at  (3,-4) {$w^2$};
\draw[edge] (w) to [xshift=0pt,yshift=2pt] node[pos=0.5,above]{$e_1$} (v);
\draw[edge] (w1) to [xshift=0pt,yshift=2pt] node[pos=0.5,above]{$e_2$} (v);
\draw[edge] (w2) to [xshift=0pt,yshift=2pt] node[pos=0.5,above]{$e_3$} (v);

\draw[edge] (w) to [xshift=0pt,yshift=2pt] node[pos=0.5,above]{} (w1);
\draw[edge] (w1) to [xshift=0pt,yshift=2pt] node[pos=0.5,above]{} (w2);

\node[inner sep=0.5pt, circle]  (w3) at (3,-6) {};
\node[inner sep=0.5pt, circle]  (w4) at (3,-6.5) {};
\path (w3) to node {\vdots} (w4);
\draw[edge] (w2) to [xshift=0pt,yshift=2pt] node[pos=0.5,above]{} (w3);
\draw[edge] (w3) to [xshift=0pt,yshift=2pt] node[pos=0.5,above]{} (v);
\end{tikzpicture}
\end{center}
Now take $T^0 = d_s(E)^0 \setminus \{v^0, w^0\}$. Then $T^0$ contains none of the singular vertices of $d_s(E)$, and the corresponding subgraph $T$ is acyclic. There are no infinite paths in $d_s(E)$, and hence conditions (T1)--(T4) of Theorem~\ref{CG theorem} hold trivially. The graph $G$ of the theorem is again the graph $F$ that we started out with.
\end{example}


\end{document}